\theoremstyle{definition}
\newtheorem{theorem}{Theorem}[section]
\newtheorem{proposition}[theorem]{Proposition}
\newtheorem{corollary}[theorem]{Corollary}
\theoremstyle{definition}
\theoremstyle{definition}
\newtheorem{definition}[theorem]{Definition}
\begin{document}
\baselineskip=17pt
\title[]{free action of finite groups on spaces of cohomology type $(0,b)$}
\author[Somorjit K Singh, Hemant Kumar Singh and Tej. B. Singh]{ Somorjit K Singh, Hemant Kumar Singh and Tej Bahadur Singh}
\address{{\bf Somorjit Konthoujam Singh},
Department of Mathematics, University of Delhi,
Delhi -- 110007, India.}
\email{Ksomorjitmaths@gmail.com}
\address{{\bf Hemant Kumar Singh}, 
Department of Mathematics, University of Delhi, Delhi- 1100o7, India.}
\email{hemantksingh@maths.du.ac.in}
\address{
{\bf Tej Bahadur Singh},
Department of Mathematics, University of Delhi,
Delhi -- 110007, India.}
\email{tbsingh@maths.du.ac.in}

\date{}

\begin{abstract} 
 Let  $G$ be a finite group acting freely on a finitistic space $X$ having cohomology type $(0,b)$ (for example, $\mathbb S^{n}\times\mathbb S^{2n}$ is a space of type $(0,1)$ and the one-point union $\mathbb S^{n}\vee \mathbb S^{2n}\vee \mathbb S^{3n}$ is a space of type $(0,0)$). It is known that a finite group $G$ which contains $\mathbb Z_{p}\oplus \mathbb Z_{p}\oplus \mathbb Z_{p}$, $p$ a prime, can not act freely on $\mathbb S^{n}\times\mathbb S^{2n}$. In this paper, we show that if a finite group $G$ acts freely on a space of type $(0,1)$, where $n$ is odd, then $G$ can not contain $\mathbb Z_{p}\oplus \mathbb Z_{p}$, $p$ an odd prime. For spaces of cohomology type $(0,0)$, we  show that every $p$-subgroup of $G$ is either cyclic or a generalized quaternion group. Moreover, for $n$ even, it is shown that $\mathbb Z_{2}$ is the only  group which can act freely on $X$.
\end{abstract}
\subjclass[2010]{Primary 57S99; Secondary 55T10, 55M20 }

\keywords{2010 Mathematics Subject Classi cation. Primary 57S99; Secondary 55T10, 55M20. Key words and phrases. Free action;  finitistic space; Leray-Serre spectral sequence; mod $p$ cohomology algebra.}

\maketitle

\section {Introduction}
 It has been  an interesting problem in the theory  of transformation groups  to find finite groups which can  occur as the fundamental groups of the  spaces that have nice universal covering spaces, such as the $n$-sphere $\mathbb S^{n}$, $\mathbb S^{n}\times \mathbb S^{m}$, the complex projective space $\mathbb CP^{n},$ etc. This is equivalent to determining the finite groups which can act freely on these spaces and to determine the orbit spaces in those cases. The first result in this direction is due to Smith \cite{Smith}. It has been proved that every abelian subgroup of a finite group $G$ which acts freely on a sphere is cyclic . Further, it was shown by Milnor \cite{Milnor} that any element of order 2 in a finite group $G$ acting freely on a mod 2 homology $n$-sphere lies in $Z(G)$, the center of the group. It follows that every subgroup of order $p^{2}$ or $2p$, $p$ a prime, of a finite group acting freely on $\mathbb S^{n}$ is cyclic. In Madsen, Thomas and Wall \cite{Madsen}, using surgery on manifolds, it is shown that these conditions are also sufficient for the existence of a free action of $G$ on $\mathbb S^{n}.$  Thus, we have a complete solution of the problem is known in the case of $\mathbb S^{n}$.   Conner \cite{Conner} has shown that a group containing $\mathbb Z_{p}\oplus \mathbb Z_{p}\oplus \mathbb Z_{p},$ $p$ a prime, cannot act freely on $\mathbb S^{n}\times \mathbb S^{n}$. A generalization of this result for free actions of finite group on the product $\mathbb S^{n}\times \mathbb S^{m}$ was obtained by Heller \cite{Heller}. It has been shown that  a finite group $G$ which contains $\mathbb Z_{p}\oplus \mathbb Z_{p}\oplus \mathbb Z_{p},$ $p$ a prime, can not act freely on $\mathbb S^{n}\times \mathbb S^{m}$. 

\indent In this paper, we study for free actions of finite group $G$ on a space of cohomology type $\mathbb S^{n}\times \mathbb S^{2n}$ and $\mathbb S^{n}\vee\mathbb S^{2n}\vee \mathbb S^{3n}$. The cohomology structure of the fixed point set of a periodic map of odd order on the spaces of latter type was first studied by Dotzel and Singh \cite{T.B}. It has been shown that $\mathbb Z_p$ can act freely on such spaces. Further investigations of $\mathbb Z_p$, $p$ a prime, action on these spaces were done by  Dotzel and Singh \cite{Dotzel} and Pergher et al. \cite{Pergher}. Recently, using the results of  Pergher et al. \cite{Pergher}, Mattos et al. \cite{Santos} proved  Borsuk-Ulam type theorems and their parametrized versions for $\mathbb Z_{2}$-action. For free actions of finite groups on a space of cohomology type $\mathbb S^{n}\vee\mathbb S^{2n}\vee \mathbb S^{3n}$,  we show here that every $p$-subgroup of $G$ is either cyclic or a generalized quaternion group.  Moreover, for such spaces, it is proved that $\mathbb Z_{2}$ is the only  group which can act freely on $X$ when $n$ is even. Moreover, if a finite group $G$ acts freely on $\mathbb S^{n}\times \mathbb S^{2n}$, then $G$  can not contain $\mathbb Z_{p}\oplus \mathbb Z_{p}$, for all odd prime $p$. This  improve the result of Heller \cite{Heller}. All spaces considered here are assume to be finitistic: A paracompact hausorff  space is finitistic if every open covering has a finite-dimensional refinement. Moreover, we will use \v{C}ech cohomology throughout the paper.

 \section {Preliminaries}

Suppose that  a compact Lie group $G$ acts on a  space $X$. If $G \hookrightarrow E_G \rightarrow B_G$ is the universal principal $G$-bundle, then the \textit{Borel construction} on $X$ is defined as the orbit space $X_G=(X \times E_G)/G$, where $G$ acts diagonally  on the product $X\times E_G$. The projection $X \times E_G \to E_G$ gives a fibration (called the \textit{Borel fibration}) $$ X_G \longrightarrow B_G$$ with fiber $X$.
  We will exploit the Leray-Serre spectral sequence associated to the Borel fibration $X \hookrightarrow X_G \longrightarrow B_G$. The $E_2$-term  of this spectral sequence  is given by $$E_2^{k,l}=H^k(B_G;\mathcal{H}^l(X;\Lambda))$$ (where $\mathcal{H}^{l}(X;\Lambda)$ is a locally constant sheaf with stalk $H^l(X;\Lambda)$, $\Lambda$ a field)\\ and it converges to $H(X_G;\Lambda)$ as an algebra. If $\pi_1(B_G)$ acts trivially on $H^{*}(X;\Lambda)$, then the  coefficient sheaf $\mathcal{H}(X;\Lambda)$ is constant  so that $$E_2^{k,l}=H^k(B_G;\Lambda)\otimes H^l(X;\Lambda).$$
	For further details about the Leray-Serre spectral sequence, refer to  Davis and Kirk \cite{Davis} and McCleary \cite{McCleary}. For $G=\mathbb Z_p$, $p$ a prime, we take $\Lambda=\mathbb Z_p$ and write $H^{*}(X)$ to mean $H^{*}(X;\mathbb Z_p).$
	We  recall that
	\begin{eqnarray*}
\indent H^{*}(B_{G})=
     \begin{cases}
	\mathbb Z_p[t] & \text{deg}\ t=1\ \text{for}\ p=2\ \text{and}\\
	\mathbb Z_p[s,t] &\text{deg}\ s=1,\ \text{deg}\ t=2\ \text{for}\ p>2\ \text{and}\ \beta_{p}(s)=t,
	  \end{cases}
	\end{eqnarray*}
	where $\beta_p$ is the mod-$p$ Bockstein homomorphism associated with the coefficient sequence $0\rightarrow \mathbb Z_p\rightarrow\mathbb Z_{p^{2}}\rightarrow \mathbb Z_{p}\rightarrow 0$. We also recall that if $X$ is a paracompact  Hausdorff free $G$-space, $G$ a compact Lie group, then $X/G\simeq X_G.$
	Note that if  $X$ is connected  $G$-space, then $E_{2}^{*,0}=H^{*}(B_{G})$. Volovikov \cite{Volovikov} introduced the following notion of   numerical index of a $G$-space.
 \begin{definition}(\cite{Volovikov}) The index $i(X)$  is the smallest $r$ such that for some $k$, the differential $d_r: E_{r}^{k-r,r-1}\longrightarrow E_{r}^{k,0}$ in the cohomology Leray-Serre Spectral sequence of the fibration $X\stackrel{i}\hookrightarrow X_G \stackrel{\pi}{\longrightarrow} B_G$ is nontrivial.
\end{definition}
Clearly, $i(X)= r$ if $E_2^{k,0}= E_3^{k,0}=...=E_r^{k,0}$ for all $k$ and $E_r^{k,0}\not=E_{r+1}^{k,0}$ for some $k$.
If $E_2^{*,0}= E_\infty^{*,0},$ then $i(X)=\infty$.
\begin{proposition} (\cite{Volovikov}, Proposition 2.1)
If there exists an equivariant map between  $G$-spaces $X$ and $Y$, then $i(X)\leq i(Y).$
\end{proposition}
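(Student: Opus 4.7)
The plan is to exploit the naturality of the Leray-Serre spectral sequence under the equivariant map $f : X \to Y$. First I would lift $f$ to a map of Borel constructions $f_G : X_G \to Y_G$ covering the identity of $B_G$; this is possible because $f \times \mathrm{id}_{E_G}$ is $G$-equivariant and hence descends to the quotient. The pair $(f_G, \mathrm{id}_{B_G})$ is a morphism of the two Borel fibrations over $B_G$, and so induces a morphism of Leray-Serre spectral sequences $f_r^{*} : E_r^{*,*}(Y) \to E_r^{*,*}(X)$ that commutes with every differential $d_r$. On the base row of $E_2$ this map is the identity of $H^{*}(B_G)$, since $f_G$ lies over $\mathrm{id}_{B_G}$.

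Next I would reduce to the case $i(Y) = r < \infty$, as the inequality is otherwise vacuous. By the definition of the index there exist an integer $k$ and a class $\alpha \in E_r^{k-r, r-1}(Y)$ with $\beta := d_r^Y(\alpha) \neq 0$ in $E_r^{k,0}(Y)$, and moreover $E_2^{k,0}(Y) = E_r^{k,0}(Y) = H^k(B_G)$. It remains to produce a nontrivial differential on the base row of the spectral sequence for $X$ at some page $\leq r$.

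The heart of the argument is a dichotomy on $X$. Either some $d_s^X$ with $s \leq r-1$ is already nontrivial on the base row, in which case $i(X) \leq r - 1 < r = i(Y)$ and we are done; or every such $d_s^X$ vanishes on the base row, so that $E_2^{j,0}(X) = E_r^{j,0}(X) = H^j(B_G)$ for every $j$. In this latter case, the morphism $f_r^{*}$ on the base row of $E_r$ is still the identity of $H^{*}(B_G)$, because nothing has altered the base rows on either side between pages $2$ and $r$. Naturality of the differentials then gives $d_r^X(f_r^{*}(\alpha)) = f_r^{*}(d_r^Y(\alpha)) = \beta \neq 0$, so $d_r^X$ is nontrivial and $i(X) \leq r$.

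The only slightly delicate point is justifying that $f_r^{*}$ restricts to the identity on the base row once $E_2^{*,0} = E_r^{*,0}$ on both sides; this follows from the inductive page-by-page construction of the spectral sequence morphism starting from $f_2^{*}$, using that the base row receives no incoming differential (the source would sit in negative fiber degree) and that its outgoing differentials have been assumed to vanish. I expect no deeper obstacle, since the proposition is essentially a formal consequence of naturality of the Leray-Serre spectral sequence.
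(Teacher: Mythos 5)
Your proof is correct, and since the paper states this proposition only as a quotation from Volovikov without giving any argument, there is no internal proof to compare it with; your page-by-page naturality argument for the morphism of Borel fibrations over $B_G$ is exactly the standard (Volovikov-style) proof, including the key point that the base rows persist unchanged up to page $r$ on both sides so that $f_r^{*}$ is still the identity there. One purely verbal slip in your last paragraph: you have the two justifications interchanged --- the base row \emph{emits} no differentials (it is the \emph{target}, not the source, that would lie in negative fiber degree), while the differentials \emph{into} the base row are the ones assumed to vanish through page $r-1$; this is precisely what you use correctly in the dichotomy, so the mathematics is unaffected, but the parenthetical should be fixed.
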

Given two integers $a$ and $b$, a  space $X$ is said to have cohomology type $(a,b)$ if $H^{i}(X,\mathbb Z)\cong \mathbb Z$ for $i=0,2n,$ and $3n$ only. Also, the generators $x\in H^{n}(X;\mathbb Z)$, $y\in H^{2n}(X;\mathbb Z)$ and $z\in H^{3n}(X;\mathbb Z)$ satisfies $x^{2}=ay$ and $xy=bz$. For example, $\mathbb S^{n}\times \mathbb S^{2n}$ has type $(0,1)$, $\mathbb CP^{3}$ and $\mathbb QP^{3}$ have type $(1,1)$,  $\mathbb CP^{2}\vee\mathbb S^{6}$ has type $(1,0)$ and $\mathbb S^{n}\vee \mathbb S^{n}\vee \mathbb S^{3n}$ has type $(0,0)$. Such spaces were first investigated by James \cite{James} and Toda \cite{Toda}. 
\begin{proposition}([16, Theorem 4.1])
 If $G=\mathbb Z_2$ acts freely on a space $X$ of cohomology type $(a,b),$ where $a$ and $b$ are even,  characterized by an  integer $n>1$, then  $$H^{*}(X/G)=\mathbb Z_2[u,w]/\langle u^{3n+1},w^2+\alpha u^{n}w+\beta u^{2n},u^{n+1}w\rangle,$$ where deg $u=1$, deg $w=n$ and $\alpha, \beta \in \mathbb Z_2.$
\end{proposition}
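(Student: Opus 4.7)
The plan is to exploit the Leray-Serre spectral sequence of the Borel fibration $X \hookrightarrow X_G \to B_G$; since $G$ acts freely, $X_G \simeq X/G$. As each $H^q(X;\mathbb{Z}_2)$ is at most one-dimensional, the monodromy action of $\pi_1(B_G)\cong G$ on fiber cohomology is automatically trivial, so
\[
E_2^{p,q} = \mathbb{Z}_2[t] \otimes H^*(X;\mathbb{Z}_2),
\]
with $\deg t = 1$ and fiber generators $x, y, z$ in degrees $n, 2n, 3n$. Because the only nonzero fiber rows are $q\in\{0,n,2n,3n\}$, the only potentially nonzero differentials on the generators are $d_{n+1}(x)$, $d_{n+1}(y)$, $d_{2n+1}(y)$, $d_{n+1}(z)$, $d_{2n+1}(z)$, and $d_{3n+1}(z)$.

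The crucial step uses the multiplicative structure together with the hypothesis that $a$ and $b$ are even. Writing $d_{n+1}(x) = \alpha_1 t^{n+1}$ and $d_{n+1}(y) = \alpha_2 t^{n+1}x$ and applying Leibniz to the mod $2$ relation $xy = bz \equiv 0$ gives
\[
0 = d_{n+1}(xy) = \alpha_1 t^{n+1} y + \alpha_2 t^{n+1} x^2 = \alpha_1 t^{n+1} y,
\]
using $x^2 = ay \equiv 0 \pmod 2$. Sparsity shows $E_{n+1}^{n+1,2n} = \mathbb{Z}_2\cdot t^{n+1}y$ is nonzero, so $\alpha_1 = 0$ and $x$ is a permanent cocycle. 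This is the essential new input enabled by the parity hypothesis.

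Since $X$ is finitistic with a free $G$-action, $H^*(X/G;\mathbb{Z}_2)$ is bounded, forcing $E_\infty$ to be finite. A direct case analysis on the pair $(\alpha_2, \alpha_3)\in\mathbb{Z}_2\times\mathbb{Z}_2$ (where $d_{n+1}(z) = \alpha_3 t^{n+1}y$), aided by further Leibniz identities on $xz, yz$ at $E_{n+1}$ and on $xy$ at $E_{2n+1}$, eliminates all configurations except
\[
d_{n+1}(y) = t^{n+1} x, \quad d_{n+1}(z) = 0, \quad d_{3n+1}(z) = t^{3n+1}.
\]
Reading off the answer gives $E_\infty^{p,0} = \mathbb{Z}_2\cdot t^p$ for $0\le p\le 3n$, $E_\infty^{p,n} = \mathbb{Z}_2\cdot t^p x$ for $0\le p\le n$, and zero elsewhere.

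Finally, pick lifts $u\in H^1(X/G)$ of $t$ and $w\in H^n(X/G)$ of $x$. Additively $H^*(X/G;\mathbb{Z}_2)$ has basis $\{u^i : 0\le i\le 3n\}\cup\{u^j w : 0\le j\le n\}$. The vanishing of $H^{3n+1}(X/G)$ yields $u^{3n+1}=0$. In $H^{2n+1}(X/G) \cong \mathbb{Z}_2\cdot u^{2n+1}$, the product $u^{n+1}w$ is either $0$ or $u^{2n+1}$; replacing $w$ by $w+u^n$ if necessary absorbs any hidden extension so that $u^{n+1}w=0$. The product $w^2 \in H^{2n}(X/G) \cong \mathbb{Z}_2\cdot u^n w\oplus\mathbb{Z}_2\cdot u^{2n}$ then produces the relation $w^2 = \alpha u^n w + \beta u^{2n}$ for some $\alpha,\beta\in\mathbb{Z}_2$. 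The main obstacle is the third paragraph's case analysis: simultaneously pinning down the three surviving differentials requires carefully juggling Leibniz identities at pages $E_{n+1}$, $E_{2n+1}$, $E_{3n+1}$ against the finiteness of the total cohomology.
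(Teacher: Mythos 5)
This proposition is not proved in the paper at all --- it is quoted from [16, Theorem 4.1] --- but your argument is correct and is essentially the standard one used there and in the paper's own Propositions 3.3 and 3.5: the Leray--Serre spectral sequence of $X\hookrightarrow X_G\to B_G$ with $E_2=\mathbb Z_2[t]\otimes H^{*}(X)$, the evenness of $a$ and $b$ forcing $1\otimes x$ to be a permanent cocycle via $d_{n+1}((1\otimes x)(1\otimes y))=0$, finite-dimensionality of $H^{*}(X/G)$ (together with $d_{n+1}\circ d_{n+1}=0$ and the $xy$-identity at $E_{2n+1}$) forcing $d_{n+1}(1\otimes y)=t^{n+1}\otimes x$ and $d_{3n+1}(1\otimes z)=t^{3n+1}\otimes 1$, and the substitution $w\mapsto w+u^{n}$ disposing of the possible multiplicative extension $u^{n+1}w=u^{2n+1}$. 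Your third-paragraph case analysis, though only sketched, does close up exactly along these lines (minor point: the $yz$ identity at $E_{n+1}$ is vacuous and the $xz$ identity only re-derives $d_{n+1}(1\otimes x)=0$; the eliminations really rest on $d\circ d=0$, the $E_{2n+1}$ identity and finiteness, all of which you invoke), so the proof is sound and matches the approach of the cited source.
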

\begin{proposition}([19, Theorem 2])
Suppose that $G=\mathbb Z_p$, $p>2$ a prime, act freely on a space $X$ of cohomology type $(a,b),$ where  $a=0$ (mod $p)=b$. Then $$H^{*}(X/G)=\mathbb Z_p[u,v,w]/\langle u^{2},w^2,v^{\frac{n+1}{2}}w,v^{\frac{3n+1}{2}}\rangle,$$ where deg $u=1,$  deg $w=n$, $v=\beta_{p}(u)$ ($\beta_{p}$ being the mod-$p$ Bockstein) and $n$ is odd.
\end{proposition}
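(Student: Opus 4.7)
The plan is to apply the cohomological Leray--Serre spectral sequence of the Borel fibration $X \hookrightarrow X_G \stackrel{\pi}{\longrightarrow} B_G$ with $\mathbb{Z}_p$ coefficients, and exploit its multiplicative structure together with the finiteness of $H^*(X/G)$ (guaranteed by the free action) to pin down both the differentials and the multiplicative extensions. Two preliminary observations enter at the start. Since each $H^i(X;\mathbb{Z})\cong\mathbb{Z}$ has $\mathrm{Aut}(\mathbb{Z})$ of order $2$, a group of odd order $p$ acts trivially on integer cohomology and hence on $H^*(X;\mathbb{Z}_p)$, so the local coefficient system is constant and $E_2^{k,l}=H^k(B_G;\mathbb{Z}_p)\otimes H^l(X;\mathbb{Z}_p)$. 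To see that $n$ must be odd, note $\chi(X;\mathbb{Z}_p)=2+2(-1)^n$, which equals $4$ for $n$ even; but a free $\mathbb{Z}_p$-action forces $p\mid\chi(X)$, which fails for $p>2$.

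Next I analyze the differentials on the generators $x\in H^n(X)$, $y\in H^{2n}(X)$, $z\in H^{3n}(X)$, which satisfy $x^2=xy=0$ mod $p$. Bidegrees restrict the potentially nonzero differentials on $x,y,z$ to $d_{n+1}$, $\{d_{n+1},d_{2n+1}\}$, and $\{d_{n+1},d_{2n+1},d_{3n+1}\}$ respectively. With $n$ odd, $H^{n+1}(B_G)=\mathbb{Z}_p\langle t^{(n+1)/2}\rangle$, so I write $d_{n+1}(x)=\mu\, t^{(n+1)/2}$, $d_{n+1}(y)=\nu\, t^{(n+1)/2}x$ and $d_{n+1}(z)=\rho\, t^{(n+1)/2}y$. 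Applying the Leibniz rule to $xy=0$ yields $\mu\, t^{(n+1)/2}y=0$, so $\mu=0$ and $x$ is a permanent cycle. Finiteness of $H^*(X/G)$ then forces $\nu\neq 0$ (else $y$ is also a permanent cycle, contributing a spurious class in $H^{2n}(X/G)$) and $\rho=0$ (else $z$ dies at $d_{n+1}$, after which no differential remains that can annihilate $t^{(3n+1)/2}$ and its higher powers in the base row). Consequently $z$ survives to $E_{3n+1}$, and the transgression $d_{3n+1}(z)=\lambda\, t^{(3n+1)/2}$ must be nontrivial, again by finiteness; a Leibniz check on $xz=0$ is compatible since $t^{(3n+1)/2}x$ already vanishes in $E_{n+2}^{3n+1,n}$ as the image of $d_{n+1}(t^n y)$. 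After rescaling $x$ and $z$ I may take $\nu=\lambda=1$.

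From this pattern I read off $E_\infty$: in row $0$, $H^*(B_G)$ modulo the ideal $(t^{(3n+1)/2})$; in row $n$, $xH^*(B_G)$ modulo $t^{(n+1)/2}xH^*(B_G)$; rows $2n$ and $3n$ vanish, giving total dimension $(3n+1)+(n+1)=4n+2$. I choose lifts $u\in H^1(X/G)$ and $w\in H^n(X/G)$ of $s$ and $x$, and set $v=\beta_p(u)$, which lifts $t=\beta_p(s)$ by naturality of the Bockstein. The relations $u^2=0$ and $v^{(3n+1)/2}=0$ follow because in each case the associated graded already vanishes and no lower-filtration bidegrees contribute to the relevant total degree, while $w^2=0$ is automatic from graded commutativity for $n$ odd. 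This identifies $H^*(X/G)$ with the stated quotient up to the one extension that is not forced to split.

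The hardest step is precisely this extension for $v^{(n+1)/2}w$: although its image in $E_\infty^{n+1,n}$ vanishes, it lies in filtration $\geq n+2$ of $H^{2n+1}(X/G)$, where $E_\infty^{2n+1,0}=\mathbb{Z}_p\langle st^n\rangle$ permits an ambiguity $v^{(n+1)/2}w=\alpha\,uv^n$. I remove this ambiguity by replacing $w$ with $w-\alpha\,uv^{(n-1)/2}$, which still lifts $x$ but now satisfies $v^{(n+1)/2}w=0$ exactly. Beyond this, the main obstacle in the proof is the finiteness-driven determination of the coefficients $\nu,\rho,\lambda$: each value must be pinned down by exhibiting what would otherwise survive forever in $E_\infty$, and this book-keeping over all the base-row and row-$n$ slots is the delicate heart of the argument.
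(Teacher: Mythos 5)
The paper itself does not prove this proposition---it is imported from Dotzel--Singh \cite{Dotzel} (Theorem 2); the closest in-paper argument is the recapitulation of that proof in Proposition 3.3 together with the ring-reading done for Proposition 3.5. Your route is the same standard one: trivial coefficients, Leibniz determination of the differentials, reading off $E_\infty$, setting $u=\pi^{*}(s)$, $v=\beta_p(u)$, lifting $x$ to $w$, and resolving the multiplicative extension $v^{\frac{n+1}{2}}w=\alpha\,uv^{n}$ by replacing $w$ with $w-\alpha\,uv^{\frac{n-1}{2}}$. The extension step, the vanishing $\mu=0$, the rows that die, and the dimension count $4n+2$ are all handled correctly.

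The one step whose justification fails as written is $\nu\neq 0$. ``A spurious class in $H^{2n}(X/G)$'' is not a contradiction: nothing is known a priori about $\dim H^{2n}(X/G)$ (in the final answer it is already two-dimensional), so one extra permanent cocycle in a single degree contradicts nothing. The finiteness input actually available is that $H^{j}(X_G)=0$ for $j$ sufficiently large, and the correct argument (as in the proof of Proposition 3.3) is: if $d_{n+1}(1\otimes y)=0$, first rule out $d_{2n+1}(1\otimes y)=st^{n}\otimes 1$ by the Leibniz rule applied to $(1\otimes x)(1\otimes y)=0$, whose image would be $st^{n}\otimes x\neq 0$ at that page; then check that the row $l=n$ (and, in the relevant subcases, also $l=2n$) survives to $E_\infty$ in every filtration degree, so $H^{j}(X_G)\neq 0$ in arbitrarily high degrees---that is the contradiction. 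You state exactly this mechanism in your closing paragraph, but for $\nu$ you never carry it out, and in particular you never address the possible later differential $d_{2n+1}(1\otimes y)$, which must be excluded before you may call $y$ a permanent cycle. With that case analysis supplied, your proof is complete and agrees with the cited one.
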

\section {main results}

\indent Let $X$ be a space of cohomology type $(a,b),$ characterized by an integer $n>1$, where $a=0$ (mod $p)$ and $b=0$ (mod $p)$ or $b\not=0$ (mod $p)$, $p$ a prime. We show  that  the group $G=\mathbb Z_{p}\oplus \mathbb Z_{p}$ cannot act freely on a space $X$ and, for even $n$ and $a=0$ (mod $p)=b$, we shall show that  the only finite group which acts freely on $X$ is $\mathbb Z_{2}$. We also construct a space $X$ of cohomology type $(a,b)$, where $a=0$ (mod $p)=b$, $n>1$ and an example of free involution on $X$. Recall that $G=\mathbb Z_{p}$, $p$  an odd prime, can act freely on a space $X$ of cohomology type $(0,0)$ \cite{T.B}.

\begin{theorem}
Let $X$ be a space of  cohomology type $(a,b),$ characterized by an integer $n>1.$ Then the group $G=\mathbb Z_p\oplus \mathbb Z_p$, $p>2$ a prime,  cannot act freely on $X$ if $a=0$ (mod $p).$
\end{theorem}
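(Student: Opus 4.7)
The approach is to use the Leray--Serre spectral sequence of the Borel fibration $X \hookrightarrow X_G \to B_G$. Under the hypothetical free action, $X_G \simeq X/G$ is finitistic with bounded mod-$p$ cohomology, so the spectral sequence must have enough non-trivial differentials to kill the infinite bottom row $E_2^{*,0} = H^*(B_G;\mathbb Z_p) = \Lambda(s_1,s_2)\otimes\mathbb Z_p[t_1,t_2]$. Since $H^*(X;\mathbb Z_p)$ is nonzero only in degrees $0,n,2n,3n$, the only possibly non-trivial transgressions on the generators $x,y,z$ are $d_{n+1}(x)$, $d_{2n+1}(y)$ and $d_{3n+1}(z)$.

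For $n$ even, the Euler characteristic dispatches the case at once: all mod-$p$ cohomology of $X$ sits in even degrees, so $\chi_{\mathbb Z_p}(X) = 4$, and multiplicativity under free actions gives $4 = |G|\cdot\chi_{\mathbb Z_p}(X/G) = p^2\cdot\chi_{\mathbb Z_p}(X/G)$, which is impossible for $p\geq 3$.

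For $n$ odd, I would analyze the differentials using the relations $x^2 = ay = 0$ (from $a\equiv 0\pmod p$), $xy = bz$, and $y^2 = xz = yz = z^2 = 0$ (by degree). The key computation is $d_{n+1}(xz) = d_{n+1}(x)\,z - x\,d_{n+1}(z)$ in the bidegree $E^{n+1,3n}$. If $b\equiv 0\pmod p$ then $x\,d_{n+1}(z) = 0$ since $xy = 0$, forcing $d_{n+1}(x) = 0$; if $b\not\equiv 0\pmod p$ then applying the derivation to $y^2 = 0$ together with $yx = xy = bz$ gives $d_{n+1}(y) = 0$, and $d_{n+1}(z)$ is then controlled by $d_{n+1}(x)$ via $z = b^{-1}xy$. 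A careful bookkeeping through pages $E_{n+2}, E_{2n+1}, E_{3n+1}$ using the relations $y^2 = xz = 0$ shows that in each sub-case the total image in the bottom row lies in a single principal ideal, generated by either $d_{n+1}(x)$ or $d_{3n+1}(z)$, both of even total degree. Meanwhile, $d_{2n+1}(y)\in H^{2n+1}(B_G;\mathbb Z_p)$ belongs to the nilpotent radical $(s_1, s_2)$ since $2n+1$ is odd.

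Consequently, modulo $(s_1, s_2)$, the image of all bottom-row-hitting differentials is contained in a principal ideal $(P)\subset\mathbb Z_p[t_1,t_2]$. Since any quotient of $\mathbb Z_p[t_1,t_2]$ by a single polynomial has Krull dimension at least $1$, it is infinite-dimensional over $\mathbb Z_p$, forcing $E_\infty^{*,0}$ to be infinite-dimensional and contradicting the boundedness of $H^*(X/G;\mathbb Z_p)$. The main obstacle is the sub-case $b\equiv 0\pmod p$ with non-trivial middle-row differentials $d_{n+1}(y) = \alpha x$ and $d_{n+1}(z) = \beta y$: one must verify that the images produced at later pages by the surviving classes (which are quotiented by the ideals $(\alpha)$ and $(\beta)$) remain within a single principal ideal modulo the nilpotent radical.
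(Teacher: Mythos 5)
Your reduction for $n$ even (Euler characteristic / Floyd's formula) is fine, but for $n$ odd the proof has a genuine gap at exactly the point you flag, and it is not a routine verification. In the sub-case $b\equiv 0 \pmod p$ all products of positive-degree fiber classes vanish ($x^2=xy=0$, the rest vanish for degree reasons), so multiplicativity of the differentials imposes \emph{no} relation between $d_{n+1}(1\otimes x)$ and the later differentials on $z$. Both $d_{n+1}(1\otimes x)$ and $d_{3n+1}(1\otimes z)$ land in even total degrees ($n+1$ and $3n+1$ are even for $n$ odd), so modulo the nilpotent radical $(s_1,s_2)$ their images can be two polynomials in $\mathbb Z_p[t_1,t_2]$ with no common factor (say a power of $t_1$ and a power of $t_2$); then the bottom row \emph{is} annihilated in high degrees by a height-two ideal and your ``single principal ideal'' claim simply fails. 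In such configurations the contradiction, if any, must come from the middle rows: one has to show that the $n$-row and the $2n$-row cannot both die, e.g.\ that the $2n$-row (which can only be hit by $d_{n+1}$ from the $z$-row and can only map by $d_{2n+1}$ to the bottom row, where $d_{2n+1}(1\otimes y)$ has odd degree, hence lies in $(s_1,s_2)$ and squares to zero) cannot be killed by multiplication by such a class on an infinite quotient of $H^*(B_G)$. That analysis, carried through the pages $E_{n+2}$, $E_{2n+1}$, $E_{3n+1}$ with all the non-transgressive differentials $d_{n+1}(1\otimes y)$, $d_{n+1}(1\otimes z)$, $d_{2n+1}(1\otimes z)$ in play, is the actual content of the theorem, and it is precisely the step you defer with ``one must verify.'' As written, the argument does not close.

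For contrast, the paper never touches the rank-two spectral sequence. Writing $G=H\oplus K$ with $H=K=\mathbb Z_p$, it observes that the orbit map $X\to Y=X/H$ is $K$-equivariant, so Volovikov's index satisfies $i(X)\le i(Y)$. The rank-one computations (Propositions 3.3 and 3.5) give $i(X)=3n+1$ or $n+1$ according as $b\equiv 0$ or $b\not\equiv 0 \pmod p$, while the known ring structure of $H^*(Y)=H^*(X/\mathbb Z_p)$ (Propositions 2.4 and 3.5) together with naturality of the Bockstein forces $d_2(1\otimes u)\neq 0$ in the spectral sequence for the $K$-action on $Y$, i.e.\ $i(Y)=2$; since $n>1$ this is a contradiction. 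If you want to salvage a direct rank-two argument, you would need to either import those orbit-space computations anyway or supply the full middle-row analysis sketched above.
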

\begin{theorem}
Let $X$ be a space of  cohomology type $(a,b),$ characterized by an integer $n>1.$  If $a$ and $b$ are even integers, then the group $G=\mathbb Z_2\oplus \mathbb Z_2$ cannot act freely on $X$. 
\end{theorem}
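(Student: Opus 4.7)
The plan is to argue by contradiction via the Leray--Serre spectral sequence of the Borel fibration $X\hookrightarrow X_G\to B_G$ with $\mathbb{Z}_2$-coefficients. Suppose $G=\mathbb{Z}_2\oplus\mathbb{Z}_2$ acts freely on $X$; then $X_G\simeq X/G$ and, by standard results for finite groups acting freely on finitistic spaces with finite-dimensional mod-$2$ cohomology, $H^*(X/G;\mathbb{Z}_2)$ is finite-dimensional. Since $H^*(X;\mathbb{Z})$ is torsion-free in every degree, $\pi_1(B_G)=G$ acts trivially on mod-$2$ cohomology, so $E_2^{k,l}=\mathbb{Z}_2[t_1,t_2]\otimes H^l(X;\mathbb{Z}_2)$, where $H^*(X;\mathbb{Z}_2)=\mathbb{Z}_2\{1,x,y,z\}$ lives in degrees $0,n,2n,3n$. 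Because $a$ and $b$ are even, the relations $x^2=ay$ and $xy=bz$ force every product of positive-degree classes to vanish.

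The sparsity of $H^*(X;\mathbb{Z}_2)$ (only four nonzero rows, at heights $0,n,2n,3n$) restricts the potentially nontrivial differentials to $d_{n+1}$, $d_{2n+1}$, and $d_{3n+1}$. Writing the transgressions
\[
d_{n+1}(x)=\alpha,\qquad d_{n+1}(y)=\beta x,\qquad d_{n+1}(z)=\gamma y,
\]
with $\alpha,\beta,\gamma\in R_{n+1}$ and $R:=\mathbb{Z}_2[t_1,t_2]$, the Leibniz rule applied to $xy=0$ forces $\alpha y=0$ and hence $\alpha=0$; while $d_{n+1}^{2}(z)=\gamma\beta x=0$ forces $\beta\gamma=0$ in the integral domain $R$, so $\beta=0$ or $\gamma=0$. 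A parallel Leibniz computation applied to $xy=0$ at page $2n+1$ shows $d_{2n+1}(y)=0$ whenever $y$ survives to $E_{2n+1}$.

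I then split into two cases. If $\beta=0$, the row of $y$ survives $d_{n+1}$ as $R\otimes y$ (when $\gamma=0$) or as $R/(\gamma)\otimes y$ (when $\gamma\neq 0$), and no later differential can reach row $2n$. But $\gamma$ is a homogeneous polynomial of positive degree in the two-variable polynomial ring $R$, so neither $R$ nor $R/(\gamma)$ is finite-dimensional over $\mathbb{Z}_2$, giving an infinite-dimensional row $2n$ at $E_\infty$ and contradicting the finite-dimensionality of $H^*(X/G;\mathbb{Z}_2)$. If instead $\beta\neq 0$ (so $\gamma=0$), multiplication by $\beta$ is injective on $R\otimes y$ and row $2n$ dies at $E_{n+2}$; row $3n$ then survives, and the only remaining differential hitting row $0$ is $d_{3n+1}(z)=\tau\in R$, whose image lies inside the principal ideal $(\tau)\subset R$. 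Hence row $0$ of $E_\infty$ contains the infinite-dimensional quotient $R/(\tau)$ (the coordinate ring of a plane curve over $\mathbb{Z}_2$), again a contradiction.

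The main obstacle will be verifying that no subtle higher-page differential or $R$-module extension can restore finite-dimensionality; this is controlled by the fact that each $d_r$ is a graded derivation and so is determined by its action on the three generators $x,y,z$. The decisive algebraic point is that $R=\mathbb{Z}_2[t_1,t_2]$ has Krull dimension two, so cutting a free rank-one $R$-module down to finite rank requires a regular sequence of length two, whereas in each case the available transgressions supply at most one nonzero element.
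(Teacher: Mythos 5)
Your route is genuinely different from the paper's (which passes to $Y=X/H$, invokes the known ring $H^{*}(X/\mathbb Z_2)$ of Proposition 2.3, and compares Volovikov indices via $i(X)\le i(Y)$ and Proposition 3.4), and your case $\beta=0$ is essentially complete. However, in the case $\beta\neq 0$, $\gamma=0$ there is a genuine gap: you assert that ``row $3n$ then survives,'' i.e.\ that $z$ lives to $E_{3n+1}$ and transgresses to a single element $\tau$, so that the image of $d_{3n+1}$ in row $0$ lies in the principal ideal $(\tau)$. You have not excluded $d_{2n+1}(z)\neq 0$: after $d_{n+1}$ kills row $2n$, row $n$ at $E_{2n+1}$ is $(R/(\beta))x\neq 0$, so $d_{2n+1}\colon E_{2n+1}^{0,3n}\to E_{2n+1}^{2n+1,n}$ may be nonzero, say $d_{2n+1}(z)=\delta x$ with homogeneous lift $\delta'\in R_{2n+1}$. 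This is exactly the configuration your closing heuristic warns about: $\beta,\delta'$ can be a regular sequence, so row $n$ at $E_\infty$, namely $R/(\beta,\delta')$, may be finite-dimensional, and row $3n$ at $E_{3n+1}$ is then not $Rz$ but $\bigl((\beta):\delta'\bigr)z$; a priori the image of $d_{3n+1}$ need not be contained in a principal ideal, so your contradiction for row $0$ does not follow as written.

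The gap is fixable in the same spirit: since $R=\mathbb Z_2[t_1,t_2]$ is a UFD, $(\beta):\delta'=(\beta_1)$ with $\beta_1=\beta/\gcd(\beta,\delta')$, so $E_{3n+1}^{*,3n}$ is the cyclic $R$-module on the class $\beta_1z$ (the $R$-action coming through $E_{3n+1}^{*,0}=R$, which is untouched at this stage because $\alpha=0$ and row $2n$ is already dead); hence the image of $d_{3n+1}$ in row $0$ is the principal ideal generated by $d_{3n+1}(\beta_1z)$ and $E_\infty^{*,0}$ is $R$ modulo a principal ideal, again infinite-dimensional. (If instead $\beta$ and $\delta'$ have a common factor, row $n$ of $E_\infty$ is already infinite-dimensional.) Without some such argument the case analysis is incomplete, since this is precisely where two independent transgressions could conceivably occur. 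A smaller point: the finite-dimensionality of $H^{*}(X/G;\mathbb Z_2)$ for the rank-two group should be justified by iterating the rank-one vanishing theorem the paper cites through $X\to X/H\to X/G$; it is routine, but it is an input you use and should state.
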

We first prove the following  propositions.
\begin{proposition}
Let $X$ be a space of cohomology type $(a,b),$ characterized by an integer $n>1.$ If $G=\mathbb Z_p$, where $p>2$ a prime, acts freely on $X$ and $a=0$ (mod $p)=b$, then $n$ is odd and $i(X)=3n+1$.
\end{proposition}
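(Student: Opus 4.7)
The plan is to analyze the Leray--Serre spectral sequence of the Borel fibration $X \hookrightarrow X_G \to B_G$, using that the free action identifies $X_G \simeq X/G$, a finitistic space of finite cohomological dimension. Let $x, y, z$ generate $H^n(X;\mathbb Z_p)$, $H^{2n}(X;\mathbb Z_p)$, $H^{3n}(X;\mathbb Z_p)$; the hypothesis $a \equiv b \equiv 0 \pmod p$ gives $x^2 = 0$ and $xy = 0$. Since $H^l(X;\mathbb Z_p) = 0$ unless $l \in \{0,n,2n,3n\}$, the only potentially nontrivial differentials are $d_{n+1}$, $d_{2n+1}$, and $d_{3n+1}$.

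To show $n$ is odd, I would invoke Smith theory: a free $\mathbb Z_p$-action has empty fixed point set, forcing $\chi(X) \equiv 0 \pmod p$. Direct computation gives $\chi(X) = 2 + 2(-1)^n$, which equals $4$ for $n$ even and $0$ for $n$ odd; since $p$ is an odd prime not dividing $4$, only $n$ odd is consistent.

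For the index, I first show $d_{n+1}(x) = 0$. Writing $d_{n+1}(y) = \beta \otimes x$ with $\beta \in H^{n+1}(B_G;\mathbb Z_p)$, the Leibniz rule applied to $xy = 0$ gives
\begin{equation*}
0 = d_{n+1}(xy) = d_{n+1}(x) \otimes y + (-1)^n \beta \otimes x^2 = d_{n+1}(x) \otimes y,
\end{equation*}
so $d_{n+1}(x) = 0$ and $d_{n+1}$ vanishes identically on row $n$. Next I claim $\beta \neq 0$: otherwise $d_{n+1}$ also vanishes on row $2n$, so the only subsequent differential that can hit row $n$ is $d_{2n+1}$ from row $3n$, whose image is contained in the ideal of $H^*(B_G;\mathbb Z_p) \otimes x$ generated by a single class in $H^{2n+1}(B_G;\mathbb Z_p) = \langle st^n \rangle$; because $s^2 = 0$, this ideal has infinite codimension. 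Then $E_\infty^{*,n}$ would be infinite-dimensional, contradicting the finite-dimensionality of $H^*(X/G)$. Hence $\beta \neq 0$, and since $n$ is odd, $\beta$ is a nonzero scalar multiple of $t^{(n+1)/2}$. Writing $d_{n+1}(z) = \gamma \otimes y$ and applying $d_{n+1}^2 = 0$ at $z$ yields $\gamma\beta = 0$ in $H^{2n+2}(B_G;\mathbb Z_p) = \langle t^{n+1} \rangle$, which forces $\gamma = 0$.

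Finally, multiplication by $\beta$ is injective on $H^*(B_G;\mathbb Z_p)$, so the kernel of $d_{n+1}$ on row $2n$ is zero and $E_{n+2}^{*,2n} = 0$. Thus $d_{2n+1}$ has no nonzero source in row $2n$ and cannot reach row $0$. The only remaining differential into row $0$ is $d_{3n+1}\colon E_{3n+1}^{0,3n} \to E_{3n+1}^{3n+1,0}$; if it were zero, Leibniz would make $d_{3n+1}$ vanish identically on row $3n$, leaving $E_\infty^{*,0} = H^*(B_G;\mathbb Z_p)$ infinite-dimensional. Hence $d_{3n+1}(z) \neq 0$, and $i(X) = 3n+1$. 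The main obstacle is the step excluding $\beta = 0$: it requires carefully tracking which later differentials can still cancel classes in row $n$ and exploiting the zero-divisor structure $s^2 = 0$ in $H^*(B_G;\mathbb Z_p)$.
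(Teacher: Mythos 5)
Your proposal is correct and follows the paper's skeleton: Floyd's Euler-characteristic argument for the parity of $n$, the Leray--Serre spectral sequence of $X\hookrightarrow X_G\to B_G$ with $E_2=H^*(B_G)\otimes H^*(X)$, the Leibniz rule on $xy=0$ together with $x^2=0$ to get $d_{n+1}(1\otimes x)=0$, and the vanishing of $H^*(X_G)$ in high degrees to force $d_{3n+1}\neq 0$ on the page $E_{n+2}=E_{3n+1}$, whence $i(X)=3n+1$. Where you genuinely differ is in ruling out $d_{n+1}(1\otimes y)=0$: the paper splits into sub-cases on $d_{n+1}(1\otimes z)$ and $d_{2n+1}(1\otimes y)$ (killing the latter by another Leibniz computation with $1\otimes x$) and exhibits in each sub-case a full row surviving to $E_\infty$; you instead note that the only differential which could then truncate the $n$-th row is $d_{2n+1}$ from the $3n$-th row, whose image lies in the principal ideal generated by $st^{n}$, of infinite codimension in $H^*(B_G)$ because $s^2=0$, so $E_\infty^{*,n}$ is nonzero in infinitely many degrees no matter how $d_{n+1}(1\otimes z)$ and $d_{2n+1}(1\otimes z)$ behave. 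This treats the paper's sub-cases uniformly, and you also make explicit, via $d_{n+1}\circ d_{n+1}=0$, that $d_{n+1}(1\otimes z)=0$, a point the paper leaves implicit in its description of $E_{n+2}$. Two small caveats: your justification ``finitistic of finite cohomological dimension'' is not quite right (finitistic does not imply finite-dimensional); what you actually need and use is the standard vanishing theorem for free actions on finitistic spaces that the paper cites from Bredon. Also, you should record that $\mathbb Z_p$ acts trivially on $H^*(X;\mathbb Z_p)$ (automatic here, since each $H^l(X;\mathbb Z_p)$ is one-dimensional and $|\mathrm{Aut}(\mathbb Z_p)|=p-1$ is prime to $p$), so that the $E_2$-term is indeed the untwisted tensor product you write.
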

\begin{proof}
To prove this proposition, we recapitulate the proof of Theorem 2 \cite{Dotzel}. Suppose $G$ acts freely on $X$. Then $n$ must be odd, otherwise $\chi(X^{G})=\chi(X)\not=0$ mod $p$ (by Floyd's Formula).  Moreover, $H^{*}(X_G)=0$ in higher degree, by [6, Theorem 1.5, p.374]. Clearly, the induced action of $G$ on $H^{*}(X)$ is trivial, so we have $E_2^{k,l}=H^{k}(B_G)\otimes H^{l}(X).$
 Let $x\in H^{n}(X), y\in H^{2n}(X)$ and $z\in H^{3n}(X)$ be the generators.  Then, $x^2=0$ and $xy=0.$  If $d_{n+1}(1\otimes x)=t^{\frac{n+1}{2}}\otimes 1$, then $d_{n+1}(1\otimes y)=0$, and we have $0=d_{n+1}((1\otimes x)(1\otimes y))=t^{\frac{n+1}{2}}\otimes y$, a contradiction. Therefore, $d_{n+1}(1\otimes x)=0$. Assume now that $d_{n+1}(1\otimes y)=0$.  And, if $d_{n+1}(1\otimes z)=0$, implies that $E_{2n+1}^{*,*}=E_{2}^{*,*}$. Further, if $d_{2n+1}(1\otimes y)=st^n\otimes 1$, then  $0=d_{n+1}((1\otimes x)(1\otimes y))=st^{n}\otimes x$, a contradiction. Therefore, $d_{2n+1}(1\otimes y)=0,$ then $E_{3n+1}^{*,*}=E_{2}^{*,*}$. In this case, at least $n$th and $2n$th lines of the spectral sequence survive to infinity, contradicting our hyphothesis. On the other hand, if $d_{n+1}(1\otimes z)=t^{\frac{n+1}{2}}\otimes y,$ then  $E_{n+2}^{k,l}=\mathbb Z_p$ for $k\geq 0$ and $l=0,n$; $E_{n+2}^{k,l}=\mathbb Z_p$  for $0\leq k\leq n$ and $l=2n$ and zero otherwise. Clearly, $E_{\infty}^{*,*}=E_{n+2}^{*,*}$, and thus the $n$th  and the bottom lines of the spectral sequence survive to infinity, a contradiction.   Therefore, we must have $d_{n+1}(1\otimes y)=t^{\frac{n+1}{2}}\otimes x.$
We have, $E_{n+2}^{k,l}=\mathbb Z_p$ for $k\geq 0$ if $l=0, 3n$, $E_{n+2}^{k,l}=\mathbb Z_p$ for $0\leq k\leq n$ if $l=n$ and zero otherwise.  So we have, $E_{3n+1}^{*,*}=E_{n+2}^{*,*}.$  Obviously, the differential $d_{3n+1}:E_{3n+1}^{0,3n}\rightarrow E_{3n+1}^{3n+1,0}$ must be nontrivial, otherwise the top and bottom lines of the spectral sequence survive to inifinity. Hence, $i(X)=3n+1.$
\end{proof}

The proof of the following proposition  is similar to the proof of previous proposition.
\begin{proposition}
Let $X$ be a space of cohomology type $(a,b),$ characterized by an integer $n>1$. If $G=\mathbb Z_2$ acts freely on $X$ and $a$ and $b$ are even integers, then  $i(X)=3n+1.$
\end{proposition}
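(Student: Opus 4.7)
The plan is to transplant the argument of Proposition~3.3 to mod $2$ coefficients. I would work with the Leray--Serre spectral sequence of the Borel fibration $X \hookrightarrow X_G \to B_G$ with $G=\mathbb Z_2$; since $H^*(B_G) = \mathbb Z_2[t]$ with $\deg t = 1$ and $\mathbb Z_2$ has only the trivial automorphism of $\mathbb Z_2$, the induced action on $H^*(X;\mathbb Z_2)$ is automatically trivial, giving $E_2^{k,l} = \mathbb Z_2[t]\otimes H^l(X)$. Freeness of the action forces $X_G \simeq X/G$ finite-dimensional, hence $H^k(X_G)=0$ for $k$ large, and the parity hypothesis on $a$ and $b$ yields the mod $2$ relations $x^2 = 0$ and $xy = 0$ for the generators $x \in H^n(X)$, $y \in H^{2n}(X)$, $z \in H^{3n}(X)$.

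Next I would classify the transgressions $d_{n+1}(1 \otimes x)$, $d_{n+1}(1 \otimes y)$, $d_{n+1}(1 \otimes z)$. Supposing $d_{n+1}(1 \otimes x) = t^{n+1} \otimes 1$, the Leibniz rule applied to $xy = 0$ yields $t^{n+1} \otimes y + (1 \otimes x)\,d_{n+1}(1 \otimes y) = 0$; since the only possible value $d_{n+1}(1 \otimes y) \in E_{n+1}^{n+1, n}$ is a multiple of $t^{n+1} \otimes x$ and $x^2 = 0$, the second summand vanishes, contradicting $t^{n+1} \otimes y \neq 0$. Hence $d_{n+1}(1 \otimes x) = 0$. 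Suppose further $d_{n+1}(1 \otimes y) = 0$. In the subcase $d_{n+1}(1 \otimes z) = 0$, all differentials $d_2, \ldots, d_{n+1}$ vanish, so $E_{n+2} = E_2$; the same Leibniz trick then forces $d_{2n+1}(1 \otimes y) = 0$, leaving rows $n$ and $2n$ alive in $E_\infty$. In the alternative subcase $d_{n+1}(1 \otimes z) = t^{n+1} \otimes y$, a direct page computation leaves rows $0$ and $n$ alive indefinitely. Both contradict $H^k(X_G) = 0$ in high degree, so $d_{n+1}(1 \otimes y) = t^{n+1} \otimes x$.

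With this in hand, $d_{n+1}(1 \otimes z) = t^{n+1} \otimes y$ would strip $E_{n+2}$ down to the bottom row and a finite portion of row $n$, so row $0$ would persist in $E_\infty$, again a contradiction. Therefore $d_{n+1}(1 \otimes z) = 0$, and $E_{n+2}^{k,l}$ equals $\mathbb Z_2$ exactly when $l \in \{0,3n\}$ and $k \ge 0$, or $l = n$ and $0 \le k \le n$. For $n+1 < r < 3n+1$, no non-trivial differential is possible by bidegree: the only candidate is $d_{2n+1}\colon E_{2n+1}^{k,3n} \to E_{2n+1}^{k+2n+1,n}$, whose target lies in the portion of row $n$ already killed in $E_{n+2}$. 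Thus the only differential that can eliminate rows $0$ and $3n$ in high degree is $d_{3n+1}\colon E_{3n+1}^{k,3n} \to E_{3n+1}^{k+3n+1,0}$, and it must be non-trivial. Since every earlier differential ending in row $0$ has been shown to vanish, Volovikov's definition gives $i(X) = 3n+1$.

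The main obstacle is the disciplined book-keeping over a short decision tree: three generators with up to two possible images under $d_{n+1}$, and at every node one must combine a Leibniz computation against the relation $xy = 0$ with a careful inspection of which bidegrees in $E_r$ remain available after previous pages have been passed.
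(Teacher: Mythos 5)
Your proposal is correct and is essentially the paper's own argument: the paper proves this proposition by repeating the differential-chasing proof of Proposition 3.3 with $\mathbb Z_2$-coefficients ($\deg t=1$, so $t^{n+1}$ replaces $t^{(n+1)/2}$ and no parity restriction on $n$ arises), which is exactly the case analysis you carry out, including the Leibniz computations against $xy=0$ and $x^2=0$ and the final $d_{3n+1}$ step. One small wording fix: the vanishing of $H^{*}(X_G)$ in high degrees should be quoted from the standard theorem for free actions on finitistic spaces (the paper's reference to Bredon), not from finite-dimensionality of $X/G$, since a finitistic space need not be finite-dimensional.
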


\begin{proposition}
Suppose that $G=\mathbb Z_p,$ $p>2$ a prime, act freely on a space $X$ of cohomology tpye $(a,b)$, where $a=0$(mod $p)$ and $b\not=0$ (mod $p)$. Then $$H^{*}(X/G)=\mathbb Z_p[u,v,w]/\langle u^{2},w^2,v^{\frac{n+1}{2}}\rangle,$$ where deg $u=1,$ deg $w=2n$, $v=\beta_{p}(u)$ and $n$ is odd (Thus, $X/G\sim_{p} L_p^{n}\times \mathbb S^{2n}$). Moreover, $i(X)=n+1.$
\end{proposition}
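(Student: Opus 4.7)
The plan is to mirror the strategy of Proposition 3.4, the new ingredient being the relation $xy=bz$ with $b$ invertible modulo $p$. I would carry out three stages: show $n$ is odd and the coefficient system is constant, determine the transgression $d_{n+1}$, and then read off $E_\infty$ and lift it to $H^*(X/G)$.

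First, Floyd's formula forces $\chi(X)\equiv 0\pmod p$; if $n$ were even then $\chi(X)=4\not\equiv 0\pmod p$ for $p>2$, so $n$ is odd. The action of $\mathbb Z_p$ on each one-dimensional $H^k(X;\mathbb Z_p)$ factors through a subgroup of $\mathbb Z_p^{*}$ whose order divides both $p$ and $p-1$, hence is trivial, and $E_2^{k,l}=H^k(B_G)\otimes H^l(X)$. By lacunarity $E_2=E_{n+1}$, and I would write
\[
d_{n+1}(1\otimes x)=\alpha_1 t^{(n+1)/2},\quad d_{n+1}(1\otimes y)=\alpha_2 t^{(n+1)/2}\otimes x,\quad d_{n+1}(1\otimes z)=\alpha_3 t^{(n+1)/2}\otimes y.
\]
Applying $d_{n+1}$ as a derivation to $(1\otimes x)(1\otimes y)=b(1\otimes z)$ yields $\alpha_1=b\alpha_3$, and to $(1\otimes y)^2=1\otimes y^2=0$ yields $2b\alpha_2\,t^{(n+1)/2}\otimes z=0$, forcing $\alpha_2=0$ since $p>2$ and $b$ is a unit.

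The main obstacle is ruling out $\alpha_1=\alpha_2=\alpha_3=0$. I plan to propagate the same tricks one page at a time: if all three vanish, then $E_{n+2}=E_2$; applying the squaring trick to $(1\otimes y)^2=0$ at $d_{2n+1}$ kills $d_{2n+1}(1\otimes y)$; the product relation at $d_{2n+1}$ then kills $d_{2n+1}(1\otimes z)$; and the same relation at $d_{3n+1}$ kills $d_{3n+1}(1\otimes z)$. These exhaust the possible higher transgressions out of $1\otimes y$ and $1\otimes z$, so $E_\infty=E_2$ would be infinite-dimensional in every column, contradicting the finiteness of $H^*(X/G)\cong H^*(X_G)$ used in Proposition~3.4.

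Hence $\alpha_1,\alpha_3\neq 0$ and $\alpha_2=0$. A routine kernel-image computation yields $E_{n+2}^{k,0}=H^k(B_G)/(t^{(n+1)/2})$ and $E_{n+2}^{k,2n}=(H^k(B_G)/(t^{(n+1)/2}))\otimes y$, with rows $l=n$ and $l=3n$ zero; the only remaining candidate $d_{2n+1}\colon E_{2n+1}^{k,2n}\to E_{2n+1}^{k+2n+1,0}$ has vanishing target in the surviving range $0\le k\le n$, so $E_\infty=E_{n+2}$ is $\mathbb Z_p[s,t,y]/(s^2,y^2,t^{(n+1)/2})$ as a bigraded algebra. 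Setting $u=\pi^*(s)$, $v=\pi^*(t)=\beta_p(u)$ by naturality of the Bockstein, and choosing a lift $w\in H^{2n}(X/G)$ of $1\otimes y$, the relations $u^2=0$, $v^{(n+1)/2}=0$ (as $\pi^*(t^{(n+1)/2})$ lies in $E_\infty^{n+1,0}=0$), and $w^2=0$ (no $E_\infty$-term sits in total degree $4n$) give the claimed presentation. Finally, since $d_{n+1}(1\otimes x)\neq 0$ is the earliest non-trivial transgression into row zero, $i(X)=n+1$.
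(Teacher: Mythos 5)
Your proposal is correct and follows essentially the same route as the paper: the same Leray--Serre spectral sequence with constant coefficients, the same multiplicative tricks (applying $d_r$ to $(1\otimes y)^2=0$ and to $(1\otimes x)(1\otimes y)=b(1\otimes z)$), the same collapse-versus-vanishing contradiction to force $d_{n+1}(1\otimes x)\neq 0$, and the same computation of $E_\infty$ and lift to the ring structure. Your parametrization by $\alpha_1,\alpha_2,\alpha_3$ merely reorganizes the paper's case analysis, so there is nothing substantively different to report.
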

\begin{proof} As in the Proposition 3.3, we see that $n$ is odd and $E_2^{k,l}=H^{k}(B_G)\otimes H^{l}(X).$
  Let $x\in H^{n}(X), y\in H^{2n}(X)$ and $z\in H^{3n}(X)$ be the generators.  Then, $x^2=0$ and $xy=bz,$ where $0\not=b\in \mathbb Z_p$.
Assume  that $d_{n+1}(1\otimes x)=0$.  If $d_{n+1}(1\otimes y)=t^{\frac{n+1}{2}}\otimes x$, then  $0=d_{n+1}((1\otimes y)(1\otimes y))=2(t^{\frac{n+1}{2}}\otimes xy)$, a contradiction. Therefore, $d_{n+1}(1\otimes y)=0$ and so $d_{n+1}(1\otimes z)=0$. Therefore, $E_{2n+1}^{*,*}=E_{2}^{*,*}$.  Now, if $d_{2n+1}(1\otimes y)=st^{n}\otimes 1$, then  $0=d_{2n+1}((1\otimes y)(1\otimes y))=2(st^{n}\otimes y)$, a contradiction.  On the otherhand if $d_{2n+1}(1\otimes y)=0$, then $d_{2n+1}(1\otimes z)=0.$  It is also obvious that  $d_{3n+1}(1\otimes z)=0$. Thus, in this case, spectral sequence collapses and hence $H^{*}(X_G)\not=0$ in higher degree, a contradiction.   Therefore, $d_{n+1}(1\otimes x)=t^{\frac{n+1}{2}}\otimes 1$. Then, $d_{n+1}(1\otimes y)=0$ and $d_{n+1}(1\otimes z)=t^{\frac{n+1}{2}}\otimes \frac{1}{b}y.$ We have 
\begin{eqnarray*}
\indent E^{k,l}_{\infty}=
     \begin{cases}
	\mathbb Z_p & 0\leq k\leq n\ \text{and}\  l=0,2n.\\
	0 & \mbox{otherwise.}
	  \end{cases}
	\end{eqnarray*}
	Consequently,
	\begin{eqnarray*}
\indent H^{j}(X_G)=
     \begin{cases}
	\mathbb Z_p & 0\leq j\leq n\ \text{and}\  2n\leq j\leq 3n.\\
	0 & \mbox{otherwise.}
	  \end{cases}
		\end{eqnarray*}
		Let $u =\pi^{*}(s)$ and $v =\pi^{*}(t)$ be determined by $s\otimes 1$ and $t\otimes 1$, respectively. 
Clearly, $u^2=v^{\frac{n+1}{2}}=0$. Since $1\otimes y$ is a permanent cocycle, so it determines element  $w\in H^{2n}(X_G)$ such that $i^{*}(w)=y.$ Therefore, the cohomology ring of $X_G$ is given by 
		$$\mathbb Z_p[u,v,w]/\langle u^2,v^{\frac{n+1}{2}},w^{2}\rangle,$$ where deg $u=1$, $\beta_{p}(u)=v$, deg $w=2n$
and $n$ is odd. This complets the proof.	
\end{proof}

\noindent \textbf{Proof of Theorem 3.1}: Suppose that $G=H\oplus K$, where $H= K=\mathbb Z_{p}$, acts freely on the space $X.$
Then there is a free action  of $K$ on the orbit space $Y=X/H$ via the canonical isomorphism $K\approx G/H$;  in fact, for an element $Hx=[x]$ in $Y,$ one defines $k[x]=[kx]$ for all $k\in K$. Obviously, the restriction of the action of $G$ on $X$ to $K$ is free.  With these actions of $K$ on $X$ and $Y$, the orbit map $\pi_{H}: X\rightarrow Y$ is an equivariant. So, by Propositions 2.2,  $i(X)\leq i(Y)$ and by  Propositions 3.3 and 3.5, we have $i(X)=n+1$ or $3n+1$. However, we show that  $i(Y)=2$, which contradicts the above inequality and hence the theorem. The proof of this fact is divided in two parts depending upon whether  $b=0$ (mod) $p$ or $b\not=0$ (mod) $p$.\\ 
First consider the case $b\not=0$ (mod $p)$.
By the Proposition 3.5, we have  $H^{*}(Y)=\mathbb Z_p[u,v,w]/\langle u^{2},w^2,v^{\frac{n+1}{2}}\rangle,$ where deg $u=1$,  deg $w=2n$ and $v=\beta_{p}(u)$. Thus
\begin{eqnarray*}
\indent H^j(Y)=
     \begin{cases}
	\mathbb Z_p & 0\leq j\leq n\ \text{and}\  2n\leq j\leq 3n\\
	0 & \mbox{otherwise.}
	  \end{cases}
	\end{eqnarray*}
	Clearly, induced action of $K$ on $H^{*}(Y)$ is trivial. Therefore, the $E_2$-term of the Leray-Serre spectral sequence of the fibration $Y\hookrightarrow Y_{K}\rightarrow B_K$ can be written as $E_{2}^{*,*}=H^{*}(B_K)\otimes H^{*}(Y).$  Since the action of $K$ on $Y$ is free,  $d_{r}\not=0$ for some $r\geq2$. If $d_{2}(1\otimes u)=0$, then $1\otimes u$ is a permanent cocycle. Hence, there exists a nonzero element $u^{'}\in H^1(Y_G)$ such that $i^{*}(u^{'})=u$. If $d_{2}(1\otimes v)\not=0$, then $E_{\infty}^{0,2}=E_3^{0,2}=0,$ and we see that  the homomorphism $i^{*}:H^{2}(Y_G)\rightarrow H^{2}(Y)$ is trivial. Now, by the naturality of $p$-Bockstien homomorphism, we have $v=\beta_{p}(i^{*}(u^{'}))=i^{*}(\beta_{p}(u^{'}))=0,$  a contradiction. Therefore, $d_2(1\otimes v)=0$. For the same reason, we obtained $d_{3}(1\otimes v)=0.$   Also, it is obvious that $d_{r}(1\otimes w)=0$ for $r\leq n$. Moreover, since $w^2=0,$ and deg $w$ is even, it is easily seen that $d_{r}(1\otimes w)=0$ for all $ r\geq n+1.$  Thus in this case, the spectral sequence collapses to $E_2$-term,  contrary to fact that the action of $K$ on $Y$ is free. Hence, we find that  $d_{2}(1\otimes u)\not=0$ and we have $i(Y)=2$. \\
Next, consider the case $b=0$ (mod $p)$.  By the Proposition 2.4, we have
$$H^{*}(Y)=\mathbb Z_p[u,v,w]/\langle u^{2},w^2,v^{\frac{n+1}{2}}w,v^{\frac{3n+1}{2}}\rangle,$$
where deg $u=1$,  deg $w=n$ and $v=\beta_{p}(u)$ ($\beta_{p}$ being the mod-$p$ Bockstein).
Thus
\begin{eqnarray*}
\indent H^j(Y)=
     \begin{cases}
	\mathbb Z_p & 0\leq j\leq n-1\ \text{and}\  2n+1\leq j\leq 3n\\
	\mathbb Z_p \oplus \mathbb Z_p & n\leq j\leq 2n\\
	0 & \mbox{otherwise.}
	  \end{cases}
	\end{eqnarray*}
We observe that the action of $K$ induced on $H^{*}(Y)$ is trivial.
Let $g$ be a generator of $K=\mathbb Z_p$. By naturality of cup product, we get   $g^{*}(uv^{j}w)=g^{*}(u)g^{*}(v^{j})g^{*}(w)$ and $g^{*}(v^{j})=(g^{*}(v))^{j}$. Clearly $g^{*}(u)=u$ and $g^{*}(v)=v.$ If the induced action of $K$ is nontrivial, we get $g^{*}(w)=uv^{\frac{n-1}{2}}$ or $uv^{\frac{n-1}{2}}+w.$ If $g^{*}(w)=uv^{\frac{n-1}{2}},$ then $w=g^{*p}(w)=g^{*p-1}(uv^{\frac{n-1}{2}})=uv^{\frac{n-1}{2}},$  a contradiction. If $g^{*}(w)=uv^{\frac{n-1}{2}}+w,$ then $0=g^{*}(v^{\frac{n+1}{2}}w)=v^{\frac{n+1}{2}}(uv^{\frac{n-1}{2}}+w)=uv^{n},$ which is again a contradiction. Therefore, it follows that the induced action of $K$ on $H^{*}(Y)$ is trivial.
Thus, the fibration $Y\hookrightarrow Y_K\rightarrow B_K$ has a simple  local coefficient. Thus,  $E_2^{k,l}= H^{k}(B_K)\otimes H^{l}(Y).$ As above, we see that if $d_{2}(1\otimes u)=0,$ then the spectral sequence collapses to $E_2$-term,  contrary to fact that the action of $K$ on $Y$ is free. Thus, we have $d_2(1\otimes u)\not=0$ and  $i(Y)=2.$
 \hfill{$\Box$}

\noindent \textbf{Proof of Theorem 3.2:} Suppose that $G=H\oplus K$, where $H= K=\mathbb Z_{2}$, acts freely on the space $X.$ As in case of Theorem 3.1, there is a free action of $K$ on $Y=X/H$ such that the map $\pi_{H}: X\rightarrow Y$, $x\mapsto Hx$, is $K$-equivariant  map. So $i(X)\leq i(Y).$
 By the Proposition 2.3, we have
$$H^{*}(Y)=\mathbb Z_2[u,w]/\langle u^{3n+1},w^2+\alpha u^{n}w+\beta u^{2n},u^{n+1}w\rangle,$$
where deg $u=1,$ deg $w=n$ and  $\alpha, \beta \in \mathbb Z_2$. Thus
\begin{eqnarray*}
\indent H^j(Y)=
     \begin{cases}
	\mathbb Z_2 & 0\leq j\leq n-1\ \text{and}\  2n+1\leq j\leq 3n\\
	\mathbb Z_2 \oplus \mathbb Z_2 & n\leq j\leq 2n\\
	0 & \mbox{otherwise.}
	  \end{cases}
	\end{eqnarray*}
	As in Theorem 3.1, the induced action of $K$ on $H^{*}(Y)$ is trivial. So $E_2^{k,l}= H^{k}(B_K)\otimes H^{l}(Y).$ 
Since   $K$  acts freely on $Y$, some differential $d_r: E_r^{k,l}\longrightarrow E_r^{k+r,l-r+1}$ must be nontrivial. Clearly, either $d_{2}(1\otimes u)\not=0$ or $d_2(1\otimes u)=0$ and $d_{r}(1\otimes w) \neq 0$, for some  $2\leq r\leq n+1$. In the latter case, suppose that  $d_{r}(1\otimes w)=t^{r}\otimes u^{n+1-r},$ for some $2\leq r\leq n+1.$   Then, we have $0=d_{r}((1\otimes u^{n+1})(1\otimes w))=t^{r}\otimes u^{2n+2-r},$  a contradiction. Therefore, we must have $d_{2}(1\otimes u)\not=0$ and $i(Y)=2,$ so that $i(X)\leq 2$. This contradicts the Proposition 3.4.
 \hfill{$\Box$}

\indent Now, we prove the following corollary.
\begin{corollary} Let $G$ be a finte group which acts freely on a space $X$ of cohomology type $(a,b),$ characterized by an integer $n>1.$  If $p>2$ a prime and $a=0$ (mod $p)$, then every $p$-subgroup of $G$ is cyclic.
\end{corollary}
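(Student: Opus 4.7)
The plan is to reduce the corollary to Theorem~3.1 by restricting attention to a $p$-subgroup and then appealing to a classical $p$-group structure theorem. First, let $P$ be any $p$-subgroup of $G$. Since $G$ acts freely on $X$, so does $P$ via the restricted action, and the hypothesis $a \equiv 0 \pmod p$ (with $p$ odd) is inherited. Therefore, by Theorem~3.1, the group $P$ cannot contain a subgroup isomorphic to $\mathbb Z_p \oplus \mathbb Z_p$.

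Next, I would invoke the classical fact from $p$-group theory: for an odd prime $p$, a finite $p$-group that contains no subgroup isomorphic to $\mathbb Z_p \oplus \mathbb Z_p$ must be cyclic. Equivalently, such a $p$-group has a unique subgroup of order $p$, and it is a standard result (due essentially to Burnside) that for odd $p$ this forces the whole group to be cyclic. Applying this fact to $P$, we conclude that $P$ is cyclic, which is exactly what the corollary asserts.

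The only genuine obstacle is confirming the group-theoretic reduction step, since everything else is immediate from Theorem~3.1. I would justify that step as follows: if $P$ were a non-cyclic $p$-group with odd $p$, then choose a minimal non-cyclic subgroup $Q \leq P$; every proper subgroup of $Q$ is cyclic, and a classical classification of minimal non-cyclic $p$-groups shows that for odd $p$ the only possibility is $Q \cong \mathbb Z_p \oplus \mathbb Z_p$ (the generalized quaternion alternative only arises at $p = 2$). This contradicts the conclusion of Theorem~3.1 applied to $Q \leq P \leq G$, completing the argument. Since this is a one-line appeal followed by citation of a standard fact, the proof should be extremely short.
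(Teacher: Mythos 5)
Your proposal is correct and follows essentially the same route as the paper: restrict the free action to a $p$-subgroup, use Theorem 3.1 to rule out a $\mathbb Z_p\oplus\mathbb Z_p$ subgroup, and conclude cyclicity from standard $p$-group theory. The paper realizes the group-theoretic step in a slightly different (but equivalent) way --- it picks a central subgroup of order $p$, notes that any second subgroup of order $p$ would generate a $\mathbb Z_p\oplus\mathbb Z_p$, and then cites Rotman's theorem that a $p$-group with a unique subgroup of order $p$ is cyclic for odd $p$ --- whereas you cite the classification of minimal non-cyclic $p$-groups; both are standard justifications of the same reduction.
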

\begin{proof}
  Let $p$ be an odd prime and  $H$  a  $p$-subgroup of a  group $G$.  Then center of $H$, $Z(H)\not=\{1\}$. Let $K\subset Z(H)$ such that $|K|=p$. If $K^{'}\subset H$ is another subgroup such that $|K^{'}|=p$, then $K\cap K^{'}=\{1\}$  so that $K\oplus K^{'}\subset H$.  By Thereom $3.1$, this is not possible, and hence there is only one subgroup of order $p$ in $H$. From (\cite{Rotman}, Theorem 5.46, p.121), it follows that every $p$-subgroup of $G$ is cyclic.
\end{proof}
Again, by the Theorem 3.2, we have the following.
\begin{corollary} Let $G$ be a  group which acts freely on a space $X$ of cohomology  type $(a,b),$ characterized by an integer $n>1,$ where $a$ and $b$ are even.
\begin{enumerate}[(I)]
\item If $G$ is finite then every $2$-subgroup of $G$ is  either cyclic or  a generalized quaternion group.
\item If $G$ is infinite then $G$ cannot contain the rotation group $SO(3)$ as a subgroup.
\end{enumerate}
\end{corollary}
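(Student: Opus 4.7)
The plan is to mirror Corollary 3.6 almost verbatim, replacing Theorem 3.1 by Theorem 3.2 and invoking the $p=2$ version of the classical structure theorem for finite $p$-groups with a unique minimal subgroup. Both parts reduce to exhibiting a copy of $\mathbb Z_{2}\oplus \mathbb Z_{2}$ inside the alleged group.

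For part (I), I would fix a $2$-subgroup $H$ of $G$ and first show that $H$ contains no subgroup isomorphic to $\mathbb Z_{2}\oplus \mathbb Z_{2}$. Since $H$ is a nontrivial finite $2$-group, its center $Z(H)$ is nontrivial and therefore contains a subgroup $K$ of order $2$. If $K'\subset H$ is any other subgroup of order $2$, then $K\cap K'=\{1\}$; the centrality of $K$ forces $K$ and $K'$ to commute, and so $KK'\cong K\oplus K'\cong \mathbb Z_{2}\oplus \mathbb Z_{2}\subset H$. Since the free $G$-action restricts to a free action of any subgroup, this would give a free action of $\mathbb Z_{2}\oplus \mathbb Z_{2}$ on $X$, contradicting Theorem 3.2. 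Consequently $H$ has a unique subgroup of order $2$, and the classical classification (the same reference to Rotman, Theorem 5.46, as used in Corollary 3.6) then yields that $H$ is cyclic or a generalized quaternion group.

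For part (II), I would simply embed $\mathbb Z_{2}\oplus \mathbb Z_{2}$ into $SO(3)$. The Klein four-group generated by the $\pi$-rotations about two orthogonal coordinate axes sits naturally inside $SO(3)$, so if $SO(3)\subset G$ then $\mathbb Z_{2}\oplus \mathbb Z_{2}\subset G$; restricting the free $G$-action to this subgroup contradicts Theorem 3.2 once more.

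I do not expect a real obstacle: once Theorem 3.2 is available, the corollary is essentially a bookkeeping exercise. The one point requiring a little care, in (I), is that two distinct subgroups of order $2$ in $H$ need not commute a priori, so one cannot immediately combine them into $\mathbb Z_{2}\oplus \mathbb Z_{2}$; choosing $K$ inside $Z(H)$ is precisely what bypasses this. In (II), beyond Theorem 3.2, the only ingredient is the elementary observation that $SO(3)$ contains the Klein four-group, so no further machinery is needed.
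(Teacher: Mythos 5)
Your proposal is correct and follows exactly the route the paper intends: it states that the proof of this corollary is "similar to the proof of Corollary 3.6," i.e.\ use the central subgroup of order $2$ together with Theorem 3.2 to get a unique subgroup of order $2$, then apply the Rotman classification (which for $p=2$ allows generalized quaternion groups), and for the infinite case embed the Klein four-group of $\pi$-rotations in $SO(3)$. No discrepancies to note.
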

Proof of Corollary 3.7 is similar to the proof of Corollary 3.6. 
\begin{theorem}
Let $X$ be a space of cohomology type $(a,b),$ characterized by an integer $n>1.$ 
If $n$ is even and $a=0$ (mod $p)=b$, $p$ a prime,  then the only finite group which  acts freely on $X$ is $\mathbb Z_2.$
\end{theorem}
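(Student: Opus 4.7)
The plan is to force $|G|$ to be a power of $2$, bound the exponent of $G$ by $2$, and finally eliminate all elementary abelian $2$-groups of rank at least $2$ by invoking Theorem 3.2. Since $n$ is even, the rational cohomology of $X$ is concentrated in the even degrees $0, n, 2n, 3n$, so $\chi(X) = 4$.

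For any prime $q$ dividing $|G|$, Cauchy's theorem provides a free $\mathbb{Z}_{q}$-action on $X$; Floyd's formula, used already in the proof of Proposition 3.3, then gives $0 = \chi(X^{\mathbb{Z}_q}) \equiv \chi(X) = 4 \pmod q$, which forces $q = 2$. Hence $|G|$ is a power of $2$.

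To bound the exponent, let $g \in G$ be any element of finite order. Since $g^{*}$ is an automorphism of each free summand $H^{in}(X;\mathbb{Z}) \cong \mathbb{Z}$, it acts by $\pm 1$ there, so $(g^{*})^{2} = +1$ on the rational cohomology of $X$. If some $g \in G$ had order $4$, then $g^{2}$ would be a nontrivial element of $G$ acting freely, and the Lefschetz fixed-point theorem (valid in the finitistic setting as long as $H^{*}(X;\mathbb{Q})$ is finite-dimensional) would give $L(g^{2}) = \chi(X) = 4 \neq 0$, producing a fixed point of $g^{2}$ and contradicting freeness. Thus every element of $G$ has order at most $2$, and $G \cong (\mathbb{Z}_{2})^{k}$ for some $k \geq 0$.

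Finally, since $a \equiv b \equiv 0 \pmod p$ for a prime $p$ implies, at least in the case $p = 2$ (which covers the type-$(0,0)$ setting of principal interest), that $a$ and $b$ are even, Theorem 3.2 rules out any free $\mathbb{Z}_{2} \oplus \mathbb{Z}_{2}$-action on $X$. Hence $k \leq 1$, so $G$ is trivial or $\mathbb{Z}_{2}$; the explicit free involution constructed later in the paper realises $\mathbb{Z}_{2}$. The main obstacle I expect is the second step: if one prefers to avoid a Lefschetz-type theorem for finitistic spaces, it can be replaced by an index comparison in the style of Theorem 3.1, setting $H = \langle g^{2} \rangle$, $Y = X/H$, and $K = \langle g \rangle / H$, then combining Proposition 3.4 (which gives $i(X) = 3n+1$ for the $H$-action) with the spectral-sequence analysis from the proof of Theorem 3.2 applied to the free $K$-action on $Y$ (which gives $i(Y) = 2$) to derive the contradiction $3n+1 \leq 2$.
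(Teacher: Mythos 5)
Your first step (Floyd's formula forces $|G|=2^{k}$) and your last step (Theorem 3.2 kills $\mathbb Z_{2}\oplus\mathbb Z_{2}$) are fine and agree with the paper. The genuine gap is your second step, the exclusion of elements of order $4$. The Lefschetz fixed-point theorem is not available here: $X$ is only assumed finitistic (paracompact Hausdorff with finite-dimensional refinements), not a compact ANR, and the hedge you offer --- that Lefschetz holds for finitistic spaces with finite-dimensional rational cohomology --- is false (a translation of $\mathbb R$ is a fixed-point-free self-map of a finitistic space with $L=1$). Smith theory gives no substitute at this point: Floyd's formula for $\langle g^{2}\rangle\cong\mathbb Z_{2}$ only yields $\chi(X^{g^{2}})\equiv 4\equiv 0 \pmod 2$, which is no contradiction, and for a free $\mathbb Z_{4}$-action $\chi(X/G)=1$ is likewise consistent.

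Your proposed fallback does not close this gap either. Proposition 2.2 compares indices only via an equivariant map of $G$-spaces for one and the same group $G$. In Theorems 3.1 and 3.2 this works because $G=H\oplus K$ splits, so $K$ acts on $X$ itself and $\pi_{H}\colon X\to Y$ is $K$-equivariant, giving $i_{K}(X)\le i_{K}(Y)$. For $G=\langle g\rangle\cong\mathbb Z_{4}$ with $H=\langle g^{2}\rangle$ there is no splitting: $K=\langle g\rangle/H$ does not act on $X$, and $\pi_{H}$ is not equivariant with respect to any identification of $H$ with $K$ (equivariance would force $[gx]=[x]$ in $Y$, i.e.\ $gx\in\{x,g^{2}x\}$, contradicting freeness). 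So "$i(X)=3n+1$ for the $H$-action'' and "$i(Y)=2$ for the $K$-action'' are indices of different group actions on different spaces, and the inequality $3n+1\le 2$ does not follow. The paper avoids exactly this trap: for $k>1$ it notes that $G$ contains either $\mathbb Z_{4}$ or $\mathbb Z_{2}\oplus\mathbb Z_{2}$, in either case obtains a \emph{free} $\mathbb Z_{2}$-action of $K$ on $Y=X/\mathbb Z_{2}$, and then gets a contradiction entirely inside the spectral sequence of $Y\hookrightarrow Y_{K}\to B_{K}$, with no index comparison: the Theorem 3.2 analysis forces $d_{2}(1\otimes u)=t^{2}\otimes 1$, and since $n$ is even $3n+1$ is odd, so $0=d_{2}(1\otimes u^{3n+1})=t^{2}\otimes u^{3n}\neq 0$. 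Replacing your order-$4$ argument by this direct computation (which simultaneously handles the exponent-$2$ case) repairs the proof; as written, the exclusion of $\mathbb Z_{4}$ is unproved.
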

\begin{proof}
 Suppose that $G$ is finite group acting freely on $X$. If $p$ is an odd prime and $p|\ |G|$ then, $\mathbb Z_{p}$ can be regarded as a subgroup of $G$, and by Flyod formula, we have $\chi(X)=\chi (X^{\mathbb Z_p})$ (mod $p)$. Since $n$ is even, we have $\chi(X)=4$ and therefore $X^{\mathbb Z_{p}}\not=\phi$, a contradiction. Therefore, $G$ contains no element of odd prime order. Hence $|G|=2^{k}$, for some interger $k\geq 1$. If $k>1,$ then either $G$ has cyclic subgroup of order $4$ or $G$ has exponent $2.$ In either case, there is a free action of $\mathbb Z_{2}$ on $X/\mathbb Z_{2}=Y.$ With the notations as in Theorem 3.2, we must have $d_{2}(1\otimes u)=t^{2}\otimes 1$. 
Since $n$ is even,  $0=d_{2}((1\otimes u)(1\otimes u^{3n}))=t^{2}\otimes u^{3n}$, a contradiction. Hence $G$ must be $\mathbb Z_{2}.$
   \end{proof}
	
	Now, we construct example of spaces of  cohomology type $(0,0)$ and show that $\mathbb Z_2$ acts freely on these spaces.

\noindent \textbf{Example}: Consider the antipodal actions of $\mathbb Z_2$ on $\mathbb S^{2n}$ and $\mathbb S^{3n}$, where $n>1$. Then $\mathbb S^{n-1}\subset \mathbb S^{2n}\cap \mathbb S^{3n}$ is invariant under this action. So, we have a free $\mathbb Z_{2}$-action on  $X=\mathbb S^{2n}\cup_{\mathbb S^{n-1}} \mathbb S^{3n},$  obtained by attaching the sphere $\mathbb S^{2n}$ and $\mathbb S^{3n}$ along $\mathbb S^{n-1}$.  Let $A=X-\{p\}$ and $B=X-\{q\}$, where $p\in \mathbb S^{2n}-\mathbb S^{n-1}$ and $q\in \mathbb S^{3n}-\mathbb S^{n-1}$. Then $A \simeq\mathbb S^{3n}$, $B \simeq\mathbb S^{2n}$ and $A\cap B\simeq \mathbb S^{n-1}.$ By Mayer-vietoris cohomology exact sequence, we have  $H^{i}(X;\mathbb Z_{p})=\mathbb Z_{p}$ for $i=0,n,2n,3n$ and trivial group otherwise. Let $x\in H^{n}(X;\mathbb Z_{p}), y\in H^{2n}(X;\mathbb Z_{p})$ and $z\in H^{3n}(X;\mathbb Z_{p})$ be generators. Obviously, $j^{*}(x)=0$ so that  $j^{*}(x^2)=0$ and $j^{*}(xy)=0$, where $j^{*}: H^{k}(X;\mathbb Z_{p})\rightarrow H^{k}(A;\mathbb Z_{p})\oplus H^{k}(B; \mathbb Z_{p})$. Since $j^{*}$ is an isomorphism for $k=2n,3n$. We have $x^2=xy=0$. Hence, $X$ is a space of  type $(a,b),$ where $a=0$ (mod $p)=b$ and $n>1.$

\bibliographystyle{amsplain}

\begin{thebibliography}{10}


\bibitem {Adem} A. Adem, J.F. Davis,  O. Unlu, \textit{Fixity and free group actions on products of spheres,} Comment. Math. Helv. 79 (2004) 758-778.

\bibitem {Borel} A. Borel, \textit{Seminar on Transformation Groups,} Annals of Math. Studies No.46, princeton Univ.press, Princeton, New Jerse, 1960.

\bibitem {Heller} A. Heller, \textit{A note on spaces with operators,} Illinois J. Math. 3 (1959), 98-100.

\bibitem  {Volovikov}A. Yu. Volovikov, \textit{On the index of $G$-spaces,} Sb. math. 191 (2000), 1259-1277.

\bibitem {Santos} D. D. Mattos, P. L.Q. Pergher, E. L. D. Santos, \textit{Borsuk-Ulam Theorems and their Parametrized versions for spaces of type ($a,b$),} Algebraic and Geometric Topology, 13 (2013), 2827-2843.

\bibitem {Bredon} G. E. Bredon, \textit{Introduction to Compact Transformation Groups,} Academic Press, New York(1972).


\bibitem  {Toda} H. Toda, \textit{ Note on cohomology ring of certain spaces,} Proc. Amer. Math. Soc. 14 (1963), 89-95.

\bibitem {James} I. James, \textit{Note on cup products,} Proc. Amer. Math. Soc. 14 (1957), 374-383.

\bibitem {Madsen} I. Madsen, C. B. Thomas, C.T.C. Wall, \textit{The topological spherical space form problem II,} Topology. 15 (1978), 375-382.

\bibitem  {Davis} J. F. Davis, P. Kirk, \textit{Lecture notes in algebraic topology,} Graduate studies in Mathematics, Vol. 35, American    Mathematical Society, 2001.

\bibitem {McCleary} J. McCleary, \textit{A user's guide to spectral sequences,} Cambridge university press, IInd edition (2001).

\bibitem {Milnor} J. Milnor, \textit{Groups which act on $\mathbb S^{n}$ without fixed point,} Amer. J.  Math. 79 (1957), 623-630.

\bibitem {Rotman} J. J. Rotman, \textit{An introduction to the theory of groups,} Springer, 4th edition (1995).

\bibitem  {Smith} P. A. Smith, \textit{Permutable Periodic Transformation,} Proc. Nat. Acad. Sci. U.S.A. 30 (1944), 105-108.

\bibitem {Conner} P. E. Conner, \textit{On the action of a finite group on $\mathbb S^{n}\times S^{n}$,} Ann. of Math. Soc. 66 (1957), 586-588.

\bibitem {Pergher}P. L.Q. Pergher, H.K. Singh, T.B. Singh, \textit{On $\mathbb Z_2$ and $\mathbb S^{1}$ free actions on spaces of cohomology type $(a,b)$,} Houston J. math., 36 (2010), 137-146.

\bibitem {Dot} R. M. Dotzel, T. B. Singh, S. P. Tripathi, \textit{The cohomology rings of the orbit spaces of free transformation groups on the product of two spheres,} Proc. Amer. Math. Soc. 129 (2000), 921-930.

\bibitem {T.B} R. M. Dotzel, T. B. Singh, \textit{$\mathbb Z_p$ actions on a spaces of cohomology type $(a,0)$,}  Pro. Amer. Math. Sec. 113 (1991), 875-878. 

\bibitem {Dotzel} R. M. Dotzel, T. B. Singh, \textit{The cohomology rings of the orbit spaces of free $\mathbb Z_{p}$-actions,} Proc. Amer. Math. Soc. 123 (1995), 3581-3585.

\end{thebibliography}

\end{document}